\newcommand{\beql}[1]{\begin{equation}\label{#1}}
\newcommand{\eeql}{\end{equation}}
\newcommand{\eqn}[1]{(\ref{#1})}
\newcommand{\R}{\mathbb{R}}
\newcommand{\pr}{\mathbb{P}}
\newcommand{\E}{\mathbb{E}}
\newcommand{\Z}{\mathbb{Z}}
\newcommand{\barZ}{\bar{\Z}}
\newtheorem{thm}{Theorem}
\newtheorem{lem}[thm]{Lemma}
\newtheorem{prop}[thm]{Proposition}
\begin{document}

\title{Large number of queues in tandem:\\ Scaling properties under back-pressure algorithm}

\author
{
Alexander L. Stolyar \\
Bell Labs, Alcatel-Lucent\\
600 Mountain Ave., 2C-322\\
Murray Hill, NJ 07974 \\
\texttt{stolyar@research.bell-labs.com}
}

\date{October 30, 2009}

\maketitle

\begin{abstract}

We consider a system with $N$ unit-service-rate queues in tandem,
with exogenous arrivals of rate $\lambda$ at queue $1$,
under a back-pressure (MaxWeight) algorithm: service at queue $n$ is blocked
unless its queue length is greater than that of next queue $n+1$.
The question addressed is how steady-state queues scale as $N\to\infty$.
We show that the answer depends on whether $\lambda$ is below or above the critical value $1/4$:
in the former case queues remain uniformly stochastically bounded,
while otherwise they grow to infinity.

The problem is essentially reduced to the behavior of the system with infinite number 
of queues in tandem, which is studied using tools from interacting particle systems 
theory. In particular, the criticality of load $1/4$ is closely related to the
fact that this is the maximum possible flux (flow rate) of a stationary totally asymmetric simple exclusion
process.

\end{abstract}

\noindent
{\em Key words and phrases:} Queueing networks, Interacting particle systems,
Stability, Back-pressure, MaxWeight, Infinite tandem queues, TASEP

\noindent
{\em Abbreviated Title:} Queue scaling under back-pressure

\noindent
{\em AMS 2000 Subject Classification:} 
90B15, 60K25, 60K35, 68M12


\section{Introduction}
\label{sec-intro}

In this paper we consider a system with $N$ single-server queues in tandem,
with exogenous Poisson arrival process at queue $1$ and customers leaving after service in queue $N$.
All service times have exponential distribution with unit mean.
The waiting room in each queue is unlimited, but 
the system operates under the following back-pressure policy: service of queue
$n$ is blocked (stopped) unless its queue length is greater than that of ``next-hop'' queue $n+1$.
The system is stable as long as input rate
$\lambda < 1$. The main question we address is: Given load $\lambda<1$ is fixed,
how steady-state queue lengths increase (scale) as $N \to \infty$.

Our main motivation comes from the fact that general {\em back-pressure} (BP) policies
(sometimes also called {\em MaxWeight}),
originally introduced by Tassiulas and Ephremides \cite{tas92}
and received much attention in the literature (cf. \cite{St2004gpd,DaiLin} for recent reviews),
are very attractive for application in communication and service networks.
This is due to the adaptive nature of BP policies -- they
can ensure maximum possible network throughput,
without a priori knowledge of flow input rates.
Key mechanism in BP policies, giving it the adaptivity (and the name),
is that the ``priority'' of a traffic flow $f$
at a given network node $n$ is ``proportional'' to the difference between flow $f$ queue
lengths at node $n$
and the next node on the route; in particular, unless this queue-differential is positive,
flow $f$ service at $n$ can be blocked. This mechanism, however, has a drawback, namely it can lead 
to a large queue build-up along a flow route, since, roughly speaking, the queue ``needs'' to
increase as we move back from a flow destination node to its source. Such ``bad scaling'' behavior of 
BP algorithm is emphasized in \cite{bss2009_fifo}, and is not very surprising (see also
our Proposition~\ref{prop-simple-bound} and bound \eqn{eq-mean-growth}). 
One approach to mitigate this scaling problem
in practical systems is proposed in \cite{bss2007_royal,bss2009_fifo}, where it is suggested 
to ``run'' BP algorithm on virtual queues as opposed to physical ones.

It is of interest to understand fundamental scaling properties of BP algorithm. 
In this paper, we address this problem for 
a simple system -- a single flow served by $N$ queues in tandem. 
We show that, perhaps somewhat
surprisingly, the scaling behavior of BP is {\em not always} bad. Namely, if load
is below some critical level, $\lambda<1/4$ for our model,
all queues remain uniformly stochastically bounded for all $N$.
(In fact, we show that the stochastic bound has exponentially decaying tail;
see Theorem~\ref{th-main}(i).)
When $\lambda>1/4$, the queues increase to infinity with $N$ (see Theorem~\ref{th-main}(ii)).

The problem of asymptotic behavior of queues as $N\to\infty$ is essentially
reduced (see Proposition~\ref{prop-ordering})
to the behavior of the system with {\em infinite} number of queues in tandem.
Such infinite-tandem model is within
the framework of interacting particle systems \cite{Liggett-book,Liggett-book99} -- methods and
results of the corresponding theory will be our main tools.
As we will see, the criticality of load $1/4$ is closely related to the fact that
$1/4$ is the maximum possible flux (average flow rate) of a stationary {\em totally asymmetric
simple exclusion process} (TASEP). In the subcritical case, $\lambda<1/4$, a stationary
regime exists such that only a finite (random) number of ``left-most'' queues 
can be greater than $1$,
the rest of the queues have at most one customer and the process ``there'' behaves like TASEP
(see Theorems~\ref{queue-exp-tail} and \ref{busy-interval-exp-tail}).
In the supercritical case, $\lambda>1/4$, each queue grows without bound with time
(see Theorem~\ref{queue-unbounded}).

Infinite series of queues in tandem is a much studied model in the literature
(cf. \cite{Martin2002} and references therein), under a variety of assumptions. 
In particular, \cite{Martin2002} studies an infinite system under a class of blocking 
policies (which are different from the BP policy in our model), where blocking is caused
by limited waiting space (finite buffer) in the queues; there, the  phenomenon of critical 
load, below which the system is stable, also exists.

The rest of the paper is organized as follows. Section~\ref{sec-model} presents the formal model 
and main result. The ``reduction'' of our problem to the behavior of infinite system, 
and basic properties of the latter, are given in Section~\ref{sec-basic}. The subcritical and 
supercritical load cases are treated in Sections~\ref{sec-subcritical} and \ref{sec-supercritical},
respectively.  In Section~\ref{sec-gen-input} we remark on more general
input processes.

\section{Formal Model and Main Result}
\label{sec-model}

Consider a series of $N$ servers (sites), numbered $1,\ldots,N$, each with unlimited queueing room.
A Poisson flow of customers, of rate $\lambda>0$, arrives at server 1, and each customer
has to be served consecutively by the series of servers, from 1 to N; after service by $N$-th server,
a customer leaves the system. 
The service time of any customer at each server is exponentially distributed, 
with mean value $1$; all service times are independent of each other and of the input process.
To be specific, assume that each site serves customers in first-come-first-serve order -- given
Markov assumptions, this will not limit generality of results.

Let us denote by $Q_n^{(N)}(t)$, $n=1,\ldots,N$, $t\ge 0$, the queue length at $n$-th server at time $t$.
The superscript $N$ indicates the number of servers, which will be the parameter we vary.

Consider the following {\em back-pressure} (BP) algorithm: site $n$ is actually serving
a customer (which is the head-of-the-line customer
from its queue) at time $t$ if and only if $Q_n^{(N)}(t) > Q_{n+1}^{(N)}(t)$.
(We use convention that $Q_{N+1}^{(N)}(t) \equiv 0$.) 
In other words, the service at site $n$
is blocked unless the queue at site $n+1$ is smaller.

The random process $(Q^{(N)}(t) \equiv (Q_n^{(N)}(t), ~n=1,2,\ldots,N), ~t\ge 0)$, 
describing evolution of the queues, is a countable irreducible continuos-time Markov chain.
Stability of this process - ergodicity of the Markov chain - is guaranteed 
under condition $\lambda<1$ -- this follows from well-known properties 
of BP algorithms (cf. \cite{DaiLin}). Therefore the unique stationary distribution exists;
we denote by $Q^{(N)}(\infty)$ a random system state in the stationary regime.

The question we address is how
steady-state queues $Q_n^{(N)}(\infty)$ grow (scale) as $N \to \infty$,
more specifically whether or not they remain stochastically bounded.
It is easy to observe that if condition 
\beql{eq-bound-tail}
Q_n^{(N)}(t)+1 \ge \max_{k>n}Q_k^{(N)}(t), ~\forall n\ge 1,
\end{equation}
holds for $t=t_0$, then it holds for all $t\ge t_0$ as well;
in particular, it does hold in the stationary regime. 
Thus, $Q_1^{(N)}(\infty)+1$ is an upper bound on
all queues. Therefore, we can concentrate on the question of whether or not
$Q_1^{(N)}(\infty)$ remains stochastically bounded as $N\to\infty$.
(If it does not, it is easy to see that, for any $n$,
$Q_n^{(N)}(\infty)$ goes to infinity in probability.) We show that the answer depends
on whether or not the input rate $\lambda$ is below or above the critical
value $1/4$. Namely,
our main result is the following
\begin{thm}
\label{th-main}
(i) If $\lambda<1/4$, then there exist $C_1>0$ and $C_2>0$ such that, uniformly on $N$,
\beql{eq-exp-bound-main}
\pr\{Q_1^{(N)}(\infty) > r\} \le C_1 e^{-C_2 r}.
\end{equation}
(ii) If $\lambda>1/4$, then $Q_1^{(N)}(\infty) \to \infty$ in probability as $N\to\infty$.
\end{thm}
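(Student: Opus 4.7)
The plan is to reduce both parts to statements about the infinite tandem system cited in the introduction. The main structural input is Proposition~\ref{prop-ordering}, which I would interpret as providing a monotone pathwise coupling of the $N$-queue system, the $(N{+}1)$-queue system, and the infinite tandem, all driven by common Poisson arrival and service clocks started empty. The mechanism is that the terminal queue of a shorter tandem is never blocked, whereas in any longer or infinite tandem it can be; a short case analysis at each arrival and at each common service clock confirms that if $Q_n^{(N)}(t) \le Q_n^{(N+1)}(t) \le Q_n^{(\infty)}(t)$ holds just before an event, then it still holds just after. Taking the increasing limit in $N$ then yields pathwise domination and, in stationary regimes, stochastic domination of $Q_1^{(N)}(\infty)$ by $Q_1^{(\infty)}(\infty)$, as well as stochastic monotonicity of $Q_1^{(N)}(\infty)$ in $N$.

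Part (i) then follows directly from the cited results. Under $\lambda < 1/4$, Theorem~\ref{queue-exp-tail} furnishes a stationary regime for the infinite tandem in which $Q_1^{(\infty)}(\infty)$ is a proper random variable satisfying $\pr\{Q_1^{(\infty)}(\infty) > r\} \le C_1 e^{-C_2 r}$. Transferring this through the stochastic domination from the coupling gives \eqn{eq-exp-bound-main} with constants uniform in $N$.

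Part (ii) I would prove by contradiction. The stochastic monotonicity in $N$ implies that the laws of $Q_1^{(N)}(\infty)$ converge to a possibly improper limit; I want to show this limit is identically $+\infty$. Suppose instead that $\{Q_1^{(N)}(\infty)\}_N$ is tight. By \eqn{eq-bound-tail}, every coordinate $Q_n^{(N)}(\infty)$ is then tight as well, so the zero-padded joint stationary laws on $\Z_+^{\mathbb{N}}$ are tight in the product topology; extract a weak subsequential limit $\mu$. I would then argue that $\mu$ is invariant for the infinite-tandem dynamics, using that on any fixed finite time window the $N$-queue and infinite-tandem evolutions started from a common initial state agree on the first $N - O(1)$ coordinates with probability tending to $1$ (since arrivals and common services propagate only finitely far in finite time) and that the infinite-tandem semigroup is continuous on cylinder functions. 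Existence of an invariant probability law for the infinite tandem contradicts Theorem~\ref{queue-unbounded}, which asserts that in the supercritical regime each queue grows without bound in time, ruling out any stationary probability regime.

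The main obstacle is the invariance step in (ii): justifying the passage of weak limits through the infinite-tandem semigroup. Three checks are needed: tightness of the joint stationary laws in the product topology, which follows cleanly from \eqn{eq-bound-tail} under the tightness hypothesis; local agreement between the $N$-queue and infinite dynamics on compact time windows, which uses finite propagation and the monotone coupling; and weak continuity of the infinite-tandem semigroup on cylinder events, which is essentially the Feller property of the corresponding interacting-particle dynamics. Everything else --- the exponential-tail bound in (i) and the divergence of every queue in (ii) --- is imported directly from the cited theorems.
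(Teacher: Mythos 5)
For part (i) you take the same route as the paper: transfer the exponential-tail bound of Theorem~\ref{queue-exp-tail} on $Q_1(\infty)$ to $Q_1^{(N)}(\infty)$ via the monotone coupling supplied by Proposition~\ref{prop-ordering}.

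For part (ii), however, you make a detour that the paper's setup already short-circuits. You argue by contradiction: assume $\{Q_1^{(N)}(\infty)\}$ is tight, extract a weak subsequential limit $\mu$ of the padded finite-system stationary laws, prove $\mu$ is invariant for the infinite-tandem semigroup, and contradict Theorem~\ref{queue-unbounded}. But Proposition~\ref{prop-ordering}(iii)--(iv) already delivers exactly what you are trying to reconstruct by hand: the laws of $Q^{(N)}(\infty)$ converge (not just subsequentially, by monotonicity in $N$) to a fixed limit $Q(\infty)$, which is stationary for $Q(\cdot)$ (the lower invariant measure). Given that, (ii) is immediate: by Theorem~\ref{queue-unbounded}, $Q_1(\infty)=\infty$ almost surely, so $Q_1^{(N)}(\infty)\Rightarrow\infty$, i.e.\ $Q_1^{(N)}(\infty)\to\infty$ in probability. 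Your compactness-plus-invariance route is not wrong in spirit, but it re-derives the content of Proposition~\ref{prop-ordering}(iii)--(iv) less cleanly, and the invariance step you flag as ``the main obstacle'' is precisely the thing the proposition disposes of. There is also a small logical gap in your version: Theorem~\ref{queue-unbounded} is a statement about the specific lower invariant measure $Q(\infty)$, not a blanket nonexistence statement for all invariant probability laws on $\Z_+^{\mathbb N}$; to get the contradiction from a hypothetical proper invariant $\mu$ you would still need an extra monotonicity argument (the lower invariant measure is stochastically dominated by any invariant law, so it too would have finite $Q_1$). Using Proposition~\ref{prop-ordering} directly avoids that extra step entirely.

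Also be careful with the informal justification that ``arrivals and common services propagate only finitely far in finite time'': over a fixed window the affected region is a.s.\ finite but not uniformly bounded, so the finite-propagation argument for agreement of the $N$-site and infinite dynamics needs a tightness/truncation estimate, not just a deterministic bound. None of this is needed once you lean on Proposition~\ref{prop-ordering} as the paper does.
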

Statements (i) and (ii) will follow from Theorems~\ref{queue-exp-tail}
and \ref{queue-unbounded}, respectively, which concern with the corresponding
infinite-tandem system.

Note that the fact that tight uniform bound \eqn{eq-exp-bound-main} {\em cannot} possibly
hold for all $\lambda<1$ is fairly obvious.
Observe that
in the stationary regime the average rate at which customers move from site $n$ to $n+1$, for any $n\le N$, 
is $\lambda$. Therefore, we have
\begin{prop}
\label{prop-simple-bound}
If $\lambda<1$, then for any $n =1,\ldots,N$,
$$
\pr\{Q_n^{(N)}(\infty)>Q_{n+1}^{(N)}(\infty)\} = \lambda.
$$
Then, using \eqn{eq-bound-tail},
$$
\E Q_n^{(N)}(\infty) - \E Q_{n+1}^{(N)}(\infty) \ge 2\lambda-1, ~n=1,\ldots,N.
$$
\end{prop}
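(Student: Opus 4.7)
The proposition has two claims: an exact flow-conservation identity in part (i) and a deterministic comparison in part (ii). I would handle them in turn, with (ii) being a one-line consequence of (i) combined with \eqn{eq-bound-tail}.

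For part (i), the plan is to compute the long-run throughput across the $n \to n+1$ link in two different ways. Under the BP rule, site $n$ serves customers at instantaneous rate $1$ exactly on the event $\{Q_n^{(N)}(t) > Q_{n+1}^{(N)}(t)\}$ and is blocked otherwise. Since the Markov chain $Q^{(N)}(\cdot)$ is positive recurrent for $\lambda<1$ (standard MaxWeight stability via a quadratic Lyapunov function, as already invoked in the preceding paragraph of the paper), the ergodic theorem gives that the time-averaged rate of transitions from site $n$ to site $n+1$ equals $\pr\{Q_n^{(N)}(\infty) > Q_{n+1}^{(N)}(\infty)\}$. On the other hand, by customer conservation (every arrival eventually exits through site $N$ and no customers are lost), this same time-averaged flow rate must equal the external input rate $\lambda$ for every $n\le N$. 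Equating the two expressions yields the claim.

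For part (ii), I would apply \eqn{eq-bound-tail} in stationarity, which in particular gives $Q_n^{(N)}(\infty) + 1 \ge Q_{n+1}^{(N)}(\infty)$ almost surely. Set $D := Q_n^{(N)}(\infty) - Q_{n+1}^{(N)}(\infty)$, so $D$ is integer-valued with $D \ge -1$. On $\{D \le 0\}$ one has $D \in \{-1,0\}$ and hence $D \ge -1$; on $\{D > 0\}$ one has $D \ge 1$. Taking expectations and using part (i) to evaluate the probabilities gives
$$
\E D \ge 1\cdot\pr\{D>0\} + (-1)\cdot\pr\{D\le 0\} = \lambda - (1-\lambda) = 2\lambda - 1,
$$
which is the stated bound.

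The only technical subtlety is making the flow-conservation step in (i) fully rigorous: one needs the departure counting process across link $n$ to have a well-defined long-run rate equal to the stationary expected instantaneous service rate at site $n$, and one needs to match this rate with the external input $\lambda$. Both facts are standard consequences of positive recurrence of $Q^{(N)}(\cdot)$ under $\lambda<1$ and the boundedness of the per-site service rates, so there is no genuine obstacle.
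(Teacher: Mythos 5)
Your proof is correct and follows essentially the same approach the paper intends: for (i), flow conservation (throughput across every link in steady state equals the external arrival rate $\lambda$) matched against the instantaneous unit service rate active on $\{Q_n > Q_{n+1}\}$, via PASTA/ergodicity; for (ii), the pointwise bound $Q_n^{(N)}(\infty)+1\ge Q_{n+1}^{(N)}(\infty)$ from \eqn{eq-bound-tail} combined with the integer-valuedness of the difference. The paper presents the argument in a single sentence preceding the proposition, and your write-up fills in the same steps with appropriate rigor.
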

Thus, in the case $\lambda>1/2$, we have at least linear growth of the first queue expected value:
\beql{eq-mean-growth}
\E Q_1^{(N)}(\infty) \ge (2\lambda-1)N.
\end{equation}

\section{Basic Properties of Infinite-Tandem Queues under Back-pressure algorithm}
\label{sec-basic}

Consider a system just like the one in Section~\ref{sec-model}, except there is an infinite
number of servers (sites) in tandem, indexed $n=1,2,\ldots$. Arriving customers never leave --
they just keep moving from site to site, to the ``right''. 
We denote by $Q_n(t)$, $n=1,2,\ldots$, $t\ge 0$, the queue length at site $n$ at time $t$,
by $Q(t) \equiv (Q_n(t), ~n=1,2,\ldots)$ the state of the entire system at time $t$.
It will be convenient to assume that the phase space for each queue (site) $n$ state
$Q_n$ is the compact set $\barZ_+ = \Z_+ \cup \{\infty\}$ with $\Z_+$ being the set
of non-negative integers and with metric, e.g., $|e^{-i}-e^{-j}|$. 
(We use convention $\infty - 1 = \infty$, so that the back-pressure algorithm is well-defined.)
The state space of Markov process
$(Q(t), ~t\ge 0)$ is $\barZ_+^S$, $S=\{1,2,\ldots\}$,
with product topology; the process is formally defined 
within the framework of interacting particle systems
(cf. Section I.3 of \cite{Liggett-book}, specifically Theorem I.3.9).
We will use some sligtly abusive notations: $Q(\cdot)$ for the process 
$(Q(t), ~t\ge 0)$, and $Q = (Q_n, ~n=1,2,\ldots) \in \barZ_+^S$ for elements of
the phase space; and will denote $\|Q\| = \sum_n Q_n$. 

Throughout the paper we will also use the following representation 
of process $Q(\cdot)$, which is standard for Markov
interacting-particle systems and is convenient for coupling (cf. \cite{Liggett-book,Liggett-book99})
of different versions of the processes. 
First, assume
that there is a site associated with each
integer $n\in \Z$, not just positive $n$. (We will use this convention
in Section~\ref{sec-subcritical}.)
The underlying probability space 
is such that there is a unit rate Poisson processes $\Pi_n$ for each site
$n$; these processes are independent from each other and of the input flow Poisson
process. Then, if time $\tau \ge 0$ is a point of the Poisson process $\Pi_n$, 
a customer moves from queue $n$ to queue $n+1$
at $\tau$ if $Q_n(\tau-) > Q_{n+1}(\tau-)$, otherwise 
the move is suppressed.

The finite system of Section~\ref{sec-model},
with $N$ sites,
will be viewed as an infinite one, but with the modification that any customer
reaching site $N+1$ is immediately removed from the system, and with $Q_n(t)\equiv 0$
for $n>N$ (and $n\le 0$).

It is easy to check that an analog of \eqn{eq-bound-tail} holds 
for the infinite system as well. Namely, if
\beql{eq-bound-tail-inf}
Q_n(t)+1 \ge \sup_{k>n}Q_k(t), ~\forall n\ge 1,
\end{equation}
holds for $t=t_0\ge 0$, then it holds for all $t\ge t_0$ as well.

Now we state basic monotonicity properties of the infinite system.
(They are easily proved by contradiction, using coupling on the common probability
space defined above.)
The inequality $Q \le Q'$ is understood component-wise; the order relation
$Q \preceq Q'$ means that both $\|Q\|$ and $\|Q'\|$ are finite and 
$$
\sum_{k\ge n} Q_k \le \sum_{k\ge n} Q'_k, ~~\forall n.
$$

\begin{lem}
\label{lem1}
If $Q(0) \le Q'(0)$ [respectively, $Q(0) \preceq Q'(0)$],
then the processes $Q(\cdot)$ and $Q'(\cdot)$ can be coupled 
so that $Q(t)\le Q'(t)$ 
[respectively, $Q(t) \preceq Q'(t)$]
for all $t\ge 0$.
\end{lem}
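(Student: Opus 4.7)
The plan is to work with the graphical coupling already set up in the paper: build $Q(\cdot)$ and $Q'(\cdot)$ on the common probability space driven by the shared exogenous input Poisson process and the site clocks $\Pi_n$, so that whenever $\Pi_n$ rings the move is attempted in both processes but actually executed only in those whose current state satisfies $Q_n > Q_{n+1}$. Under this coupling the sample paths are piecewise constant between a locally finite sequence of event times, so both claims reduce to showing that the appropriate order relation is preserved across each single transition, by induction on event times, with the rigorous justification for the infinite-site construction supplied by Theorem~I.3.9 of \cite{Liggett-book}.

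For the componentwise order, assume $Q(t-)\le Q'(t-)$. Exogenous arrivals affect site $1$ in both processes identically and are trivial. A ring of $\Pi_n$ affects only coordinates $n,n+1$, and the four subcases (move in neither, in both, in $Q$ only, in $Q'$ only) are handled by short case analyses. The only mildly nontrivial case is the one in which $Q$ moves but $Q'$ does not: then $Q_n > Q_{n+1}$ while $Q'_n \le Q'_{n+1}$, and the potential violation at site $n+1$ is averted by the chain $Q_{n+1}+1 \le Q_n \le Q'_n \le Q'_{n+1}$, while the inequality at site $n$ follows from $Q_n - 1 \le Q_n \le Q'_n$; the symmetric case is analogous.

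The $\preceq$ order is more delicate and is naturally handled through the tail sums $S_n(t) = \sum_{k \ge n} Q_k(t)$ and $S'_n(t)$, which stay finite almost surely for every finite $t$ because $\|\cdot\|$ changes by at most one per exogenous arrival and is preserved by internal moves. Setting $D_n = S'_n - S_n$, one checks that a move at site $n$ changes only $S_{n+1}$ (by $+1$); so if both or neither of the two processes move, $D_{n+1}$ is unchanged, and if $Q'$ moves while $Q$ does not, $D_{n+1}$ increases. The main obstacle is the only remaining case, in which $Q$ moves at site $n$ but $Q'$ does not, since then $D_{n+1}$ drops by one; one must show $D_{n+1}(t-)\ge 1$ in this scenario. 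This is a contradiction argument: if $D_{n+1}(t-)=0$, then the identity $D_n = (Q'_n - Q_n) + D_{n+1}$ combined with $D_n\ge 0$ forces $Q'_n \ge Q_n$; this together with $Q_n \ge Q_{n+1}+1$ and $Q'_n \le Q'_{n+1}$ yields $Q'_{n+1} \ge Q_{n+1}+1$, so $D_{n+1} = (Q'_{n+1}-Q_{n+1}) + D_{n+2} = 0$ forces $D_{n+2} \le -1$, contradicting the inductive hypothesis $D_{n+2}\ge 0$. Everything else is bookkeeping on the graphical representation, and the same argument specializes to the finite-$N$ system of Section~\ref{sec-model} by setting $Q_n\equiv 0$ for $n>N$.
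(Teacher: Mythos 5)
Your proof is correct and follows the route the paper sketches: couple $Q(\cdot)$ and $Q'(\cdot)$ on the shared Poisson clocks and exogenous input, then verify by case analysis at each event time that the relevant order is preserved, with the $\preceq$ case handled via the tail-sum identity and a short contradiction at the critical transition. This fills in exactly the details the paper leaves to the reader.
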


As a corollary of Lemma~\ref{lem1}, we obtain the following

\begin{prop}
\label{prop-ordering}
Consider the infinite system and the finite systems, for each $N=1,2,\ldots$,
all with zero initial state (with all queues being $0$). Then,\\
(i) All corresponding processes can be coupled so that for all $t\ge 0$,
$$
Q^{(1)}(t) \le Q^{(2)}(t) \le \ldots Q^{(N)}(t) \le \ldots \le Q(t),
$$
$$
Q^{(1)}(t) \preceq Q^{(2)}(t) \preceq \ldots Q^{(N)}(t) \preceq \ldots \preceq Q(t).
$$
(ii) Process $Q(\cdot)$ is stochastically non-decreasing in $t$ (in the sense of $\le$ order),
and process $Q^{(N)}(\cdot)$ is stochastically non-decreasing in both $t$ and $N$.\\
(iii) We have convergences in distribution
\beql{eq-conv-stationary0}
Q^{(N)}(t) \Rightarrow Q^{(N)}(\infty), ~~\mbox{as}~t\to\infty,
\end{equation}
\beql{eq-conv-stationary1}
Q^{(N)}(\infty) \Rightarrow Q(\infty), ~~\mbox{as}~N\to\infty,
\end{equation}
where the sequence in the left-hand side is stochastically non-decreasing and $Q(\infty)$
just denotes its limit,
\beql{eq-conv-stationary2}
Q(t) \Rightarrow Q(\infty), ~~\mbox{as}~t\to\infty.
\end{equation}
(iv) The distribution of the limit $Q(\infty)$ is a stationary distribution 
(namely, the lower invariant measure)
of Markov process
$Q(\cdot)$.\\
(v) Condition \eqn{eq-bound-tail-inf} holds for all $t\ge 0$ and for the stationary
state $Q(\infty)$.
\end{prop}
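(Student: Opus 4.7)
The plan is to drive every assertion from the common probability-space construction introduced above together with the attractiveness encoded in Lemma~\ref{lem1}. For part~(i), I would couple $Q^{(1)}(\cdot),\ldots,Q^{(N)}(\cdot),\ldots,Q(\cdot)$ simultaneously using the same site Poisson processes $\Pi_n$ and the same arrival Poisson process, with all initial states equal to $0$. Induction on the common event times reduces the problem to verifying that each event---an arrival at site~$1$ or a service attempt at some site~$n$---preserves the coordinate-wise chain $Q^{(N)}(t)\le Q^{(N+1)}(t)\le Q(t)$. This is where attractiveness lives: the $N$-system differs from the infinite one only through the boundary at site~$N$, where customers are removed rather than forwarded, and in the coupling this only adds ``exits'' without creating inward motion, so a short case analysis at each event type yields the monotonicity. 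The $\preceq$ chain comes for free, since, starting from $0$, each $\|Q^{(N)}(t)\|$ and $\|Q(t)\|$ is dominated by the a.s.\ finite number of arrivals in $[0,t]$, and on finite-mass configurations the $\le$ ordering trivially implies $\preceq$. Part~(ii) is then a standard attractiveness bootstrap of Lemma~\ref{lem1}: for any $s\ge 0$, compare $Q(\cdot)$ started from $0$ with $\tilde Q(\cdot)$ started from (a realisation of) the law of $Q(s)$; the lemma yields $Q(t)\le\tilde Q(t)\stackrel{d}{=}Q(t+s)$, hence $Q(t)\le_{st}Q(t+s)$, and the identical argument handles $Q^{(N)}(\cdot)$ in~$t$, while monotonicity in~$N$ is already contained in~(i).

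For part~(iii), convergence~\eqn{eq-conv-stationary0} is ordinary ergodic convergence of the irreducible positive-recurrent chain $Q^{(N)}(\cdot)$ whose stability has already been recorded. Convergence~\eqn{eq-conv-stationary1} holds because (i) at $t=\infty$ makes $Q^{(N)}(\infty)$ stochastically non-decreasing in $N$, and such a sequence in the Polish space $\barZ_+^S$ has a distributional limit, which we declare to be $Q(\infty)$. Convergence~\eqn{eq-conv-stationary2} is analogous, giving a limit $\tilde Q(\infty)$. The real content is the identification $Q(\infty)=\tilde Q(\infty)$ in law. One direction is immediate from the coupling in~(i) by letting $t\to\infty$ and then $N\to\infty$. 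For the reverse, the crucial input is finite propagation speed: starting from $0$, a customer reaching site~$K$ requires at least $K$ distinct site-Poisson events to fire in sequence, so the first hitting time of site~$K$ stochastically dominates a $\Gamma(K,1)$ variable and tends to $\infty$ with $K$. Consequently the random index $M(t)$ of the rightmost ever-visited site on $[0,t]$ is a.s.\ finite, and in the coupling $Q^{(N)}(t)=Q(t)$ for every $N>M(t)$, so $Q^{(N)}(t)\uparrow Q(t)$ a.s. Combining with $Q^{(N)}(t)\le_{st}Q^{(N)}(\infty)$ and then letting $N\to\infty$ gives $Q(t)\le_{st}Q(\infty)$, and $t\to\infty$ completes the match. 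Part~(iv) is then a Feller semigroup argument: writing $\mu_t$ for the law of $Q(t)$ and $\mu_\infty$ for its weak limit, the identity $\mu_{t+s}=\mu_t P_s$ together with the Feller property of $P_s$ forces $\mu_\infty=\mu_\infty P_s$, i.e.\ invariance; the ``lower invariant'' qualifier simply records that $\mu_\infty$ is obtained as the limit from the pointwise minimal initial state.

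For part~(v) I would first verify~\eqn{eq-bound-tail-inf} dynamically by induction on Poisson events, starting from the trivially valid $t=0$. A site-$1$ arrival raises only $Q_1$, strengthening every relevant bound. A service event at site~$n$ fires only when $Q_n(t-)\ge Q_{n+1}(t-)+1$ and shifts one unit from $Q_n$ to $Q_{n+1}$; a short case check at the at-most-three affected sites $n-1,n,n+1$ confirms that every local bound survives. To reach $Q(\infty)$, the product-topology convergence $Q(t)\Rightarrow Q(\infty)$ gives, for every fixed $n$ and $M>n$, the joint distributional limit of the finite-max inequality $Q_n(t)+1\ge\max_{n<k\le M}Q_k(t)$; monotone convergence as $M\to\infty$ then upgrades this to the full sup bound for $Q(\infty)$, with infinite queue lengths (if any) making the bound trivially true. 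Across the whole proposition, the main technical obstacle is the matching of the two candidate limits in~(iii), which is what forces the finite-propagation-speed estimate into the argument; the remaining parts amount to careful but essentially routine bookkeeping on the common probability space.
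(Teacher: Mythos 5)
The paper does not provide a detailed proof of this proposition; it is simply asserted as a corollary of Lemma~\ref{lem1} (the monotone coupling lemma), with the underlying details left to the reader. Your proposal fills in these details using exactly the framework the paper intends — coupling all processes on the common probability space and appealing to attractiveness — and the reconstruction is sound.

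A few specific remarks. Your event-by-event case analysis for part~(i) is the right way to handle the boundary mismatch between the $N$-system and the $(N+1)$-system (or the infinite system); this does require a small extension beyond the literal statement of Lemma~\ref{lem1}, since the two processes have different boundary rules, not merely different initial states, and you correctly do this by direct verification. For part~(iii), the finite-propagation-speed argument (a customer must traverse at least $K-1$ exponential clocks to reach site $K$, so for fixed $t$ one has $Q^{(N)}(t)=Q(t)$ a.s.\ for all $N$ larger than a finite random $M(t)$) is the non-obvious ingredient needed to identify the two candidate limits, and you supply it cleanly; the paper leaves this implicit. Your verification of part~(v) both dynamically and in the limit is careful — in particular, passing from the finite-max inequality over $n<k\le M$ to the supremum via monotone convergence is the correct way to handle the product topology on $\barZ_+^S$. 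One small addition worth making in part~(iv): beyond showing $\mu_\infty$ is invariant by the Feller argument, the ``lower'' qualifier also carries the standard implication that $\mu_\infty\le_{\mathrm{st}}\nu$ for every invariant measure $\nu$, which follows by the same Lemma~\ref{lem1} comparison started from $0$ versus from $\nu$; you gesture at this but do not state it. Overall: correct, same approach as the paper would take, with the interchange-of-limits and finite-speed estimate being a genuine value-add over the paper's terse ``corollary of Lemma~\ref{lem1}.''
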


Proposition~\ref{prop-ordering}(iii)
 of course implies that 
$Q_1^{(N)}(\infty)$ is stochastically non-decreasing, converging in distribution
to $Q_1(\infty)$ and therefore,
to prove Theorem~\ref{th-main},
we can study the distribution of $Q_1(\infty)$.
(If it happens that $Q_1(\infty)=\infty$, this implies $Q_1^{(N)}(\infty)\to\infty$
in probability as $N\to\infty$.)

We will need one more monotonicity property, which is also
a corollary of Lemma~\ref{lem1}.
Its meaning is very simple: if in addition
to process $Q(\cdot)$ we consider another process $Q'(\cdot)$ which is 
constructed the same way as $Q(\cdot)$, but with some additional
exclusions (``obstructions'') on the movement of the customers,
then $Q'(\cdot)$ will stay ``behind'' $Q(\cdot)$ in the sense
of $\preceq$ order.

\begin{prop}
\label{prop-ordering-under-obstruction}
Consider a fixed realization of the process $Q(\cdot)$, 
which includes a fixed finite initial state $Q(0)$ and realizations
of the input process (at site 1) and of all processes $\Pi_n$, $n\ge 1$.
Now, suppose further that in the realizations of
processes $\Pi_n$, 
some of the points (jumps) are marked as ``valid'' 
(in an arbitrary way) and the remaining points are ``invalid''.
Consider another realization $Q'(\cdot)$, with the same initial state $Q'(0)=Q(0)$,
and constructed in the same way as $Q(t)$, except invalid points of
processes $\Pi_n$ are ``ignored'' (cause no action). Then,
$$
Q'(t) \preceq Q(t), ~~\forall t\ge 0,
$$
and, in particular, $Q'_1(t) \ge Q_1(t)$ for all $t$.
\end{prop}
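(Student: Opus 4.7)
The plan is to recast the conclusion $Q'(t) \preceq Q(t)$ in terms of the ``integrated'' quantities $S_m(t) := \sum_{k \ge m} Q_k(t)$ and $S'_m(t) := \sum_{k \ge m} Q'_k(t)$, so that the claim becomes $S'_m(t) \le S_m(t)$ for every $m \ge 1$ and every $t$. The key observation is that $S_m$ changes only by $+1$, and only when a customer actually crosses the boundary from site $m-1$ to site $m$ (with external arrivals playing the role of crossings into site $1$). Since both realizations share the same external arrival process and the same finite initial state, one has $S_1(t) \equiv S'_1(t)$; so the $\preceq$ claim is equivalent to saying that for every internal boundary $n \to n+1$, the number of crossings up to time $t$ in the obstructed process $Q'$ is no larger than the corresponding number in $Q$.

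First I would argue by induction on the (a.s.\ distinct) jump times of the input process and of the $\Pi_n$'s, showing that $S'_m \le S_m$ is preserved by each event. Between jumps both functions are constant. External arrivals bump $S_1$ and $S'_1$ simultaneously. An invalid point of $\Pi_n$ never affects $Q'$, so it can only increase $S_{n+1}$ while leaving $S'_{n+1}$ alone, which trivially preserves the inequalities. A valid point of $\Pi_n$ increments each of $S_{n+1}$ and $S'_{n+1}$ by at most one; three of the four sub-cases (both $Q$ and $Q'$ move, only $Q$ moves, neither moves) preserve all inequalities automatically.

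The main obstacle is the remaining sub-case of a valid $\Pi_n$-point at which $Q_n(\tau-) \le Q_{n+1}(\tau-)$ but $Q'_n(\tau-) > Q'_{n+1}(\tau-)$, so $S'_{n+1}$ jumps up by one while $S_{n+1}$ does not. To keep $S'_{n+1} \le S_{n+1}$ after the event I need a strict slack $S'_{n+1} \le S_{n+1} - 1$ just before it. This will follow from a short discrete-convexity calculation using $Q_k = S_k - S_{k+1}$: the two queue-length hypotheses translate into $S_n + S_{n+2} \le 2 S_{n+1}$ and $S'_n + S'_{n+2} \ge 2 S'_{n+1} + 1$, which together with the inductive bounds $S'_n \le S_n$ and $S'_{n+2} \le S_{n+2}$ yield $2 S'_{n+1} \le 2 S_{n+1} - 1$; integrality then forces $S'_{n+1} \le S_{n+1} - 1$, exactly the slack that is needed.

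For the final assertion $Q'_1(t) \ge Q_1(t)$, note that $S_1(t) = S'_1(t)$ for all $t$ (same arrivals, same initial mass, no departures from the infinite system), whereas $S'_2(t) \le S_2(t)$ by the $\preceq$-bound just established. Hence
$$Q'_1(t) \;=\; S'_1(t) - S'_2(t) \;\ge\; S_1(t) - S_2(t) \;=\; Q_1(t),$$
completing the plan.
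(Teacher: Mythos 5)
Your proof is correct and supplies exactly the details the paper omits: the paper merely asserts this proposition as a corollary of Lemma~\ref{lem1}, whose own proof is left as ``easily proved by contradiction, using coupling,'' with no argument spelled out. Your induction over event times on the coupled realizations, with the discrete-convexity step $S_n+S_{n+2}\le 2S_{n+1}$ versus $S'_n+S'_{n+2}\ge 2S'_{n+1}+1$ yielding the integer slack $S'_{n+1}\le S_{n+1}-1$ in the one nontrivial sub-case, is precisely the verification the paper is implicitly relying on.
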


\section{Subcritical case: $\lambda <1/4$}
\label{sec-subcritical}

Suppose $\lambda <1/4$. We will construct 
a process $Q'(\cdot)$, coupled with $Q(\cdot)$ so that 
Proposition~\ref{prop-ordering-under-obstruction} holds path-wise,
and such that we can obtain a stochastic upper bound on $Q'_1(t)$.

Consider process $Q(\cdot)$, with zero initial state $\|Q(0)\|=0$,
constructed on the probability space 
described in Section~\ref{sec-basic}.
We will extend the probability space to define a stationary {\em totally asymmetric
simple exclusion process} (TASEP, cf. Chapter VIII of \cite{Liggett-book}),
with sites being integers $n\in \Z$, and particles moving
to the ``right''. Specifically, let us choose arbitrary (density) $\rho \le 1/2$,
such that 
$\mu=\rho(1-\rho) > \lambda$. 
Let $Y_n(t)\in \{0,1\}$ denote the number of particles of TASEP at site $n$ at time $t\ge 0$.
We augment the probability space 
so that, independently of all other driving processes, at time $0$
each site $n\in \Z$ contains a particle, $Y_n(0)=1$, with probability $\rho$
and does not contain one, $Y_n(0)=0$, with probability $1-\rho$.
The movement of TASEP particles will be driven by the same Poisson processes $\Pi_n$,
that drive process $Q(\cdot)$. 
(The exogenous input process at site $1$ does {\em not} affect TASEP.)
If time $\tau$ is a point (jump)
of $\Pi_n$ associated with site $n$, then the particle located
at $n$ (if any) attempts to jump to site $n+1$ -- it actually does jump
if site $n+1$ is empty, and it stays at $n$ otherwise. It is well known
that if the initial state $Y(0)$ has Benoulli distribution
as defined above, then the TASEP process $Y(\cdot)$ is stationary 
(cf. Theorem VIII.2.1 of \cite{Liggett-book}). The flux of this process, i.e. the average
departure rate of particles from a given site,
is $\mu=\rho(1-\rho)$; the average speed
of a given (``tagged'') particle is $v=1-\rho$. 
(For example, $\rho = 1/2$
gives the maximum possible flux $\mu=1/4 > \lambda$; this is where the condition
$\lambda<1/4$ comes from: $\lambda$ needs to be less than the flux of a stationary TASEP.)

The process $Q'(\cdot)$ has the same (zero) initial state as $Q(\cdot)$,
and is constructed the same way as $Q(\cdot)$ except for an additional
exclusion: a customer (particle) from queue (site) $1$ cannot move to 
queue $2$ at time $\tau$ unless a particle of the TASEP jumps from site $1$ to $2$
at $\tau$. We now record basic properties of process $Q'(\cdot)$.

\begin{prop}
\label{prop-bounding-process}
(i) $Q'(t) \preceq Q(t)$ and $Q'_1(t) \ge Q_1(t)$ for all $t\ge 0$ 
(by Proposition~\ref{prop-ordering-under-obstruction}).\\
(ii) For any $n\ge 2$ and any $t\ge 0$, $Q'_n(t) \le Y_n(t)$. In other words,
at all sites to the right of $1$, process $Q'(\cdot)$ stays ``within TASEP'';
in particular, there can be at most one particle in each site $n\ge 2$.\\
(iii) A particle jump from site $1$ to $2$ in the process $Q'(\cdot)$
happens at time $\tau$ if and only if $Q'_1(\tau-)\ge 1$
and there is a jump of TASEP particle from $1$ to $2$ at time $\tau$.\\
(iv) Process $Q'(\cdot)$ is stochastically non-decreasing with $t$.
\end{prop}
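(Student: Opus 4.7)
The plan is to treat the four parts in order, with (ii) as the core technical claim and (iii), (iv) following from it. For (i), I would observe that the construction of $Q'(\cdot)$ is exactly the setup of Proposition~\ref{prop-ordering-under-obstruction}: declare a point $\tau$ of $\Pi_1$ \emph{valid} iff a TASEP particle jumps from site $1$ to $2$ at $\tau$, and declare every point of every $\Pi_n$ with $n\ge 2$ valid. Then $Q'(t)\preceq Q(t)$ is immediate. For $Q'_1(t)\ge Q_1(t)$, note that since the two processes share the same external arrivals and no customer ever leaves the infinite system, $\|Q(t)\|=\|Q'(t)\|$; combined with $\sum_{k\ge 2}Q'_k(t)\le\sum_{k\ge 2}Q_k(t)$ (the $\preceq$ inequality at $n=2$), this gives $Q'_1(t)\ge Q_1(t)$.

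For (ii), I would induct over the (a.s.\ distinct) jump times of the Poisson processes driving the joint dynamics of $(Q'(\cdot),Y(\cdot))$, showing that the invariant $Q'_n(t)\le Y_n(t)$ for all $n\ge 2$ is preserved; the base case is trivial since $Q'(0)=0$. At $\tau\in\Pi_1$, any $1\to 2$ move in $Q'(\cdot)$ is, by construction, tied to a TASEP jump $1\to 2$, which requires $Y_2(\tau-)=0$ and hence $Q'_2(\tau-)=0$ by the inductive hypothesis; thus whether $Q'$ actually moves or not, after the event $Q'_2(\tau)\le 1=Y_2(\tau)$. If there is no TASEP jump at $\tau\in\Pi_1$, then neither $Q'_2$ nor $Y_2$ changes. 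At $\tau\in\Pi_n$ for $n\ge 2$, an increase of $Q'_{n+1}$ requires $Q'_n(\tau-)>Q'_{n+1}(\tau-)$, and the inductive bound $Q'_n\le Y_n\le 1$ forces $Q'_n(\tau-)=1$, $Y_n(\tau-)=1$, $Q'_{n+1}(\tau-)=0$; regardless of the value of $Y_{n+1}(\tau-)$, one has $Y_{n+1}(\tau)=1$ after the event (the TASEP either jumps or is blocked by an existing particle), so $Q'_{n+1}(\tau)=1\le Y_{n+1}(\tau)$ and $Q'_n(\tau)=0\le Y_n(\tau)$. The remaining sub-cases involve no increase of $Q'_k$ for any $k\ge 2$ and trivially preserve the invariant. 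I expect this case analysis to be the main obstacle: one has to control $(Q',Y)$ at both $n$ and $n+1$ simultaneously, and the inductive bound $Q'_n\le 1$ is what keeps the bookkeeping finite.

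Part (iii) is now immediate: a TASEP jump $1\to 2$ at $\tau$ forces $Y_2(\tau-)=0$ and hence, by (ii), $Q'_2(\tau-)=0$, so the back-pressure condition $Q'_1(\tau-)>Q'_2(\tau-)$ built into the definition of $Q'(\cdot)$ collapses to $Q'_1(\tau-)\ge 1$; the converse direction is built into the construction. For (iv), I would exploit stationarity of $Y(\cdot)$: fix $h>0$ and define a coupled copy $\tilde Q'(\cdot)$ driven by the time-shifted arrival and $\Pi_n$ processes and by the shifted TASEP $Y(h+\cdot)$, started from $\tilde Q'(0)=Q'(h)$. Since $Y$ is stationary, $\tilde Q'(\cdot)$ has the same distribution as $Q'(\cdot)$, while pathwise $\tilde Q'(s)=Q'(s+h)$. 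Coupling $Q'(\cdot)$ (started at $0$) with $\tilde Q'(\cdot)$ on the common shifted driving data and invoking monotonicity in the initial condition (a direct adaptation of Lemma~\ref{lem1}, since the TASEP obstruction at site $1$ is identical in the two copies) then yields $Q'(s)\le\tilde Q'(s)=Q'(s+h)$ pathwise, hence the stochastic monotonicity of $Q'(\cdot)$ in $t$.
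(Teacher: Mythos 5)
Your proposal is correct, and the paper itself gives no proof for this proposition (it is stated as ``easy to check'' in spirit, citing Proposition~\ref{prop-ordering-under-obstruction} only for part (i)); so your argument supplies exactly the kind of detail the paper leaves implicit. Part (i) as you derive it is the same reasoning the paper embeds in Proposition~\ref{prop-ordering-under-obstruction}. Part (iii) follows from (ii) exactly as you say, and your stationarity-plus-monotonicity argument for (iv) is the natural route (and indeed the same mechanism the paper uses, without comment, for Proposition~\ref{prop-ordering}(ii)); the only thing to note is that Lemma~\ref{lem1} is stated for the unobstructed dynamics, so one should say explicitly, as you do, that the coupling proof carries over verbatim because the obstruction (the TASEP clock at site $1$) is identical for the two copies and depends only on the driving data, not on the state.

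One small gloss worth tightening is in the inductive step for (ii). You handle the case where $Q'$ actually moves a particle $n\to n+1$ carefully, but then say the ``remaining sub-cases involve no increase of $Q'_k$ for any $k\ge 2$ and trivially preserve the invariant.'' That is not quite trivial: the invariant $Q'_n\le Y_n$ can in principle be broken by $Y_n$ \emph{decreasing} while $Q'_n$ stays put. Concretely, if $\tau\in\Pi_n$ with $Q'_n(\tau-)=1$ but $Q'$ is BP-blocked because $Q'_{n+1}(\tau-)=1$, you must rule out a TASEP departure from site $n$. The fix is the same one you use in the moving case: by the inductive hypothesis at $n+1$, $Y_{n+1}(\tau-)\ge Q'_{n+1}(\tau-)=1$, so the TASEP particle at $n$ (if present) is also blocked and $Y_n(\tau)=Y_n(\tau-)$. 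The other sub-case, $Q'_n(\tau-)=0$, really is trivial since $0\le Y_n(\tau)$ always. With that one sentence added, the induction is airtight.
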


We know from Proposition~\ref{prop-ordering-under-obstruction}
 that $Q'_1(t)$ is an upper bound of $Q_1(t)$.
The behavior of queue length $Q'_1(t)$ is such that it is initially zero,
$Q'_1(0)=0$, the input process is Poisson with rate $\lambda$, and the ``service
process'' is the stationary process of TASEP particle jumps from site $1$ to $2$.
We denote by $A(t_1,t_2)$ and $S(t_1,t_2)$ the number of points (jumps) of the 
arrival and service processes, respectively,
in the interval $(t_1,t_2]$; WLOG we assume that these processes
are defined for all real times, that is $t_1,t_2\in \R$, $t_1 \le t_2$;
their average rates are $\lambda$ and $\mu=E S(t_1,t_2)/(t_2-t_1)$, respectively.
From the large deviations estimate given below in Lemma~\ref{low-ldp-service},
it also follows that $S(-s,0)/s \to \mu$, $s\to\infty$, with probability $1$.

From classical Loynes constructions \cite{loy} it is known that the 
distribution of $Q'_1(t)$ is stochastically non-decreasing with $t$,
and as $t\to \infty$ it weakly converges to the stationary distribution,
which in turn is same as that of random variable
\beql{eq-stationary-queue}
Q'_1(\infty) \doteq \sup_{s\ge 0} [A(-s,0)-S(-s,0)].
\end{equation}
$Q'_1(\infty)$ is a proper random variable due to condition $\lambda<\mu$,
which guarantees that the RHS of \eqn{eq-stationary-queue} is finite w.p.1.

Thus, we see that, as $t\to\infty$, $Q'_1(t)$ (and then $Q_1(t)$) 
remains stochastically bounded by $Q'_1(\infty)$. 
Moreover, the large deviations estimates (Lemma~\ref{low-ldp-service})
imply exponential bound on the tail of $Q'_1(\infty)$ distribution.
We proceed with the details.

The following fact is a known property of the stationary TASEP defined above. At time $0$, let us
consider the site with smallest index $n_0 \ge 2$ that contains a particle, and tag this particle.
(Obviously, $n_0-2$ has geometric distribution.) If we consider the point process
of jumps of tagged particle in time interval $[0,\infty)$, it is 
a Poisson process of rate $v=1-\rho$ (cf. Corollary VIII.4.9 of \cite{Liggett-book}).
Therefore, the location $H(t)$ of tagged particle
at time $t\ge 0$ is $H(t) = 2 + H_1 + H_2(t)$,
where $H_1$ is geometric random variable with mean $(1-\rho)/\rho$,
$H_2(t)$ is Poisson r.v. with mean $v t$, and $H_1$ and $H_2(t)$
are independent. From the stationarity of TASEP we also know that,
at any time $t$, the total number $G(n)$ of particles at sites $2,3, ...,n$
is simply the sum of $n-1$ independent Bernoulli variables with mean $\rho$.
Using ``separate'' large deviations estimates for $H(t)$ and $G(n)$,
even though these two r.v. are not independent, we obtain the following

\begin{lem}
\label{low-ldp-service}
For any $\delta>0$, there exist $C_3>0$ and $C_4>0$ such that
\beql{eq-ldp}
\pr\{|S(0,t) - \mu t| > \delta t\} \le C_3 e^{-C_4 t}.
\end{equation}
\end{lem}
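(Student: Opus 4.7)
The plan is to use TASEP's order-preservation property to write $S(0,t)$ exactly in terms of $H(t)$ and a partial-sum quantity, and then estimate each piece separately. Because TASEP particles cannot overtake one another, at time $t$ the sites $\{2,\ldots,H(t)\}$ contain exactly: the tagged particle at $H(t)$; the $S(0,t)$ particles that crossed the edge $(1,2)$ during $[0,t]$ (all of which remain strictly to the left of the tagged particle); and none of the particles originally at sites $\ge n_0+1$ (which remain strictly to the right of the tagged particle). Writing $G_t(n)$ for the number of TASEP particles at sites $\{2,\ldots,n\}$ at time $t$, this bookkeeping yields the identity
\[
S(0,t) \;=\; G_t(H(t)) - 1.
\]

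With this identity I decompose
\[
S(0,t) - \mu t \;=\; \bigl[G_t(H(t)) - 1 - \rho(H(t)-1)\bigr] \;+\; \bigl[\rho(H(t)-1) - \mu t\bigr].
\]
For the second bracket, recall $H(t) = 2 + H_1 + H_2(t)$ with $H_1$ geometric and $H_2(t)$ Poisson of mean $vt$. The geometric variable has exponential tails and the classical Chernoff/Cramer bound for Poisson gives $\pr\{|H_2(t)-vt|>\eta t\}\le C e^{-c t}$ for any $\eta>0$; combined with $\rho v = \mu$, this yields $\pr\{|\rho(H(t)-1)-\mu t|>\delta t/2\}\le C e^{-c t}$ for any $\delta>0$.

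The first bracket requires more care, because $G_t(\cdot)$ and $H(t)$ are both measurable with respect to the same TASEP history and hence correlated. I handle this by truncating $H(t)$ at the deterministic threshold $M := \lceil 2 v t\rceil$, which (by the previous step) is exceeded with probability at most $C e^{-c t}$. On the event $\{H(t)\le M\}$ the first bracket is bounded by $1 + \max_{2\le n\le M}|G_t(n)-\rho(n-1)|$. By stationarity of the TASEP, for each fixed $t$ the marginal $\{Y_k(t)\}_{k\ge 2}$ is i.i.d.\ Bernoulli$(\rho)$, so the map $n\mapsto G_t(n)-\rho(n-1)$ is a centered Bernoulli partial sum. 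A Chernoff bound gives $\pr\{|G_t(n)-\rho(n-1)|>\delta t/3\}\le 2 e^{-c_0 t}$ uniformly in $n\le M$, and a union bound over the $O(t)$ values of $n\le M$ produces $\pr\{\max_{2\le n\le M}|G_t(n)-\rho(n-1)|>\delta t/3\}\le C e^{-c' t}$. A final union bound across the three bad events proves \eqref{eq-ldp}.

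The main obstacle is conceptual rather than computational: the text's warning that $H(t)$ and $G_t(\cdot)$ are ``not independent'' is neutralised by replacing the random argument $H(t)$ with a deterministic window of width $O(t)$, so that the two tail estimates can be carried out ``separately'' (as the paper puts it) with the extra union-bound factor $O(t)$ comfortably absorbed by the exponential decay. Once the identity $S(0,t)=G_t(H(t))-1$ is established, everything else is standard Cramer/Chernoff estimation.
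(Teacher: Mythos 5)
Your proof is correct and takes essentially the same route as the paper: both rest on the identity $S(0,t) = G_t(H(t)) - 1$ (implicit in the paper's displayed inclusion) and decouple the correlated pair $(H(t), G_t(\cdot))$ by replacing the random argument $H(t)$ with a deterministic threshold, then applying standard Chernoff/Cram\'er estimates to the geometric, Poisson, and Bernoulli-sum pieces. The only cosmetic difference is that the paper exploits monotonicity of $n \mapsto G_t(n)$ to reduce to a two-event union bound at the single cut point $n \approx (v \mp \epsilon)t$, whereas you truncate $H(t)$ at $M = O(t)$ and union-bound over all $n \le M$; the extra polynomial factor is harmless against the exponential decay.
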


\begin{proof}
To prove bound
$$ 
\pr\{S(0,t) < (\mu-\delta)t\} \le C_3 e^{-C_4 t}
$$ 
we can choose $\epsilon>0$, small enough, so that
$$
\pr\{S(0,t) < (\mu-\delta)t\} \le \pr\{H(t) \le (v-\epsilon)t\} 
+ \pr\{G((v-\epsilon)t) \le (\rho-\epsilon)(v-\epsilon)t\}.
$$
Bound 
$$ 
\pr\{S(0,t) > (\mu+\delta)t\} \le C_3 e^{-C_4 t}.
$$ 
is proved similarly.
\end{proof}

\begin{thm}
\label{queue-exp-tail}
Assume $\lambda<1/4$.
There exist $C_1>0$ and $C_2>0$ such that
\beql{eq-th-tail}
\pr\{Q'_1(\infty) > r\} \le C_1 e^{-C_2 r};
\end{equation}
and then
$$
\pr\{Q_1(\infty) > r\} \le C_1 e^{-C_2 r}.
$$
\end{thm}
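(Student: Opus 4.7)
The plan is to exploit the Loynes representation \eqn{eq-stationary-queue}, $Q'_1(\infty) = \sup_{s \ge 0}[A(-s,0) - S(-s,0)]$, and bound the right-hand side by a union bound that trades off a Poisson upper-tail estimate for the arrival process $A$ against the TASEP lower-tail estimate for the service process $S$ supplied by Lemma~\ref{low-ldp-service}. Fix $\epsilon \in (0,(\mu-\lambda)/2)$, say $\epsilon = (\mu-\lambda)/3$. The pivotal set inclusion is
$$\{A(-s,0) - S(-s,0) > r\} \subseteq \{A(-s,0) > (\lambda+\epsilon)s + r/2\} \cup \{S(-s,0) < (\mu-\epsilon)s - r/2\},$$
since if both complementary events hold then $A(-s,0) - S(-s,0) \le r - (\mu-\lambda-2\epsilon)s \le r$.

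Since $A$ and $S$ are counting processes, I would discretize by breaking $[0,\infty)$ into unit intervals and using monotonicity to bound
$$\pr\{Q'_1(\infty) > r\} \le \sum_{k=0}^{\infty} \pr\{A(-(k+1),0) - S(-k,0) > r\},$$
the slack $S(-(k+1),-k)$ being stochastically dominated by a $\mathrm{Poisson}(1)$ variable and absorbed into a negligible correction. Split the sum at $k^{\ast} = \lceil r/(2\mu) \rceil$. For $k \ge k^{\ast}$, both events on the right of the inclusion have probability exponentially small in $k$: the $A$-event by the standard Chernoff bound for a Poisson variable exceeding its mean by a fixed multiplicative factor, and the $S$-event by Lemma~\ref{low-ldp-service} applied with $\delta = \epsilon/2$ (using $(\mu-\epsilon)k - r/2 \le (\mu-\epsilon/2)k$). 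The resulting geometric tail contributes at most $C e^{-c r}$. For $k < k^{\ast}$, the service threshold $(\mu-\epsilon)k - r/2$ is strictly negative, so the $S$-event is vacuous; only the Poisson event $\{A(-(k+1),0) > (\lambda+\epsilon)(k+1) + r/2\}$ survives, with mean $\le \lambda r/(2\mu) + O(1)$ and threshold $\ge r/2$, so that the ratio mean/threshold stays at most $\lambda/\mu + O(1/r) < 1$ for all large $r$. Poisson Chernoff then yields a uniform bound $\le C e^{-c r}$, and the $O(r)$ many terms absorb into a slightly worse rate constant.

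The main obstacle I anticipate is the apparent gap between Lemma~\ref{low-ldp-service}, which controls only the \emph{proportional} deviations $|S(0,t) - \mu t| > \delta t$, and the events $\{S(-s,0) < (\mu-\epsilon)s - r/2\}$ whose deficit $r/2$ is not proportional to $s$ for small $s$. The way through is to notice that precisely in the small-$s$ regime this service event is impossible (the threshold is negative), so the lemma is only ever applied in the regime $s \gtrsim r$, where its bound is more than sufficient. Once the exponential bound on $Q'_1(\infty)$ is in hand, the bound on $Q_1(\infty)$ follows immediately from Proposition~\ref{prop-bounding-process}(i), which gives $Q_1(t) \le Q'_1(t)$ pathwise under the coupling, combined with Proposition~\ref{prop-ordering}(iv) (and Loynes convergence for $Q'_1$) to pass to the stationary limit.
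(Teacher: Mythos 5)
Your proof is correct and follows essentially the same route as the paper: both start from the Loynes representation $Q'_1(\infty) = \sup_{s\ge 0}[A(-s,0)-S(-s,0)]$, discretize the supremum, apply a union bound with a trade-off threshold between the arrival and service processes, and invoke Poisson Chernoff together with Lemma~\ref{low-ldp-service}, finishing via the coupling $Q_1(t)\le Q'_1(t)$ and stochastic monotone convergence. The only difference is bookkeeping: you discretize into unit intervals and split the sum at $k^\ast\sim r$, whereas the paper uses intervals of length $d=br$ with $b\lambda<1$ so that the additive $r$-shift is swallowed entirely by the single $k=1$ term and the remaining terms are pure proportional-deviation events.
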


{\bf Remark.} Given the large deviations bound \eqn{eq-ldp}, the argument 
to prove \eqn{eq-th-tail} is quite standard (see \cite{GW1994,Duff1995}).
However, formally, \cite{Duff1995} for example, requires a stronger condition,
large deviations principle (LDP) for $S(0,t)/t$; 
we did not find this LDP result in the literature and it is not needed for our purposes.
Hence, for completeness, we give a proof of the theorem.

\begin{proof}
It follows from the definition \eqn{eq-stationary-queue} that for any fixed $d>0$
\beql{eq-bound-discrete}
Q'_1(\infty) \le \sup_{k=1,2,\ldots} [A(-kd,0)-S(-(k-1)d,0)] 
\equiv A(-d,0) + \sup_{k=1,2,\ldots} [A(-kd,-d)-S(-(k-1)d,0)]
\end{equation}
Let us fix $b>0$ such that $b\lambda < 1$, and for each $r>0$ we will choose
$d=br$. Then we can write
$$
\pr\{Q'_1(\infty) > r\} \le \pr\{ A(-br,0) \ge r\}
+ \sum_{k=2,3,\ldots} \pr\{A(-kd,-d)-S(-(k-1)d,0) \ge 0\}.
$$
If we fix $\delta>0$ small enough so that $\lambda+\delta < \mu - \delta$
and $(\lambda + \delta)b <1$,
we have
$$
\pr\{Q'_1(\infty) > r\} \le \pr\{ A(-br,0) \ge (\lambda + \delta) b r\} +
$$
$$
\sum_{k=2,3,\ldots} [\pr\{A(-kd,-d) \ge (\lambda + \delta)(k-1)d\}
   + \pr\{S(-(k-1)d,0) \le (\mu - \delta)(k-1)d\}].
$$
It remains to apply large deviations bounds on $A(\cdot)$ and $S(\cdot)$
(Lemma~\ref{low-ldp-service}).
\end{proof}

Let us define 
$$
B(Q(t)) \doteq \min\{n=1,2\ldots ~|~ Q_n(t)=0\},
$$
$$
B'(Q(t)) \doteq \min\{n=1,2\ldots ~|~ \sum_{k=1}^n Q_k(t) < n\}.
$$
$B(Q(t))$ is the ``busy interval'' of the process $Q(t)$: to the right of, and including,
site $B(Q(t))$ all sites have at most one customer and so the (instantaneous)
evolution of the process follows the same ``rules'' as that of TASEP.
The interpretation of $B'(Q(t))$ is as follows: starting from the state
$Q(t)$, we take a customer from the left-most site with 2 or more customers,
and move it to the left-most empty site, and then repeat until all sites
have at most 1 customer; then site $B'(Q(t))$ is the left-most empty site
of the modified state.

Obviously, $B'(Q(t)) \ge B(Q(t))$; and it is easy to check that
$Q'(t) \preceq Q(t)$ implies $B'(Q'(t)) \ge B'(Q(t))$. Thus,
$B'(Q'(t)) \ge B(Q(t))$. 

\begin{thm}
\label{busy-interval-exp-tail}
Assume $\lambda<1/4$.
There exist $C_5>0$ and $C_6>0$ such that, for all $t\ge 0$
$$
\pr\{B'(Q'(t)) > n\} \le C_5 e^{-C_6 n},
$$
and then
$$
\pr\{B(Q(t)) > n\} \le C_5 e^{-C_6 n}.
$$
\end{thm}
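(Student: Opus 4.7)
The plan is to control $B'(Q'(t))$ by splitting the event $\{B'(Q'(t))>n\}$ into a contribution from the head-queue $Q'_1(t)$ and a contribution from the TASEP occupations at sites $\{2,\ldots,n\}$. The bound on $B(Q(t))$ then follows at once from the relation $B'(Q'(t)) \ge B(Q(t))$ noted in the excerpt just before the theorem, so all the work goes into the first inequality.

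First I would unpack the definition of $B'$: the event $\{B'(Q'(t))>n\}$ implies $\sum_{k=1}^n Q'_k(t) \ge n$. By Proposition~\ref{prop-bounding-process}(ii), $Q'_k(t) \le Y_k(t) \in \{0,1\}$ for every $k\ge 2$, which gives
$$ Q'_1(t) \ge n - \sum_{k=2}^n Q'_k(t) \ge n - \sum_{k=2}^n Y_k(t),$$
or equivalently $Q'_1(t)+\sum_{k=2}^n Y_k(t) \ge n$ on that event. Fix the density $\rho \in (0,1/2]$ chosen in Section~\ref{sec-subcritical} and pick any $\alpha \in (\rho,1)$, e.g.\ $\alpha = (1+\rho)/2$. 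A union bound then yields
$$ \pr\{B'(Q'(t))>n\} \le \pr\{Q'_1(t) \ge (1-\alpha)n\} + \pr\{\textstyle\sum_{k=2}^n Y_k(t) \ge \alpha n\}.$$

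For the first term I would combine the stochastic monotonicity of $Q'_1(t)$ in $t$ (Proposition~\ref{prop-bounding-process}(iv)) with the exponential tail of $Q'_1(\infty)$ from Theorem~\ref{queue-exp-tail} to obtain $\pr\{Q'_1(t) \ge r\} \le \pr\{Q'_1(\infty) \ge r\} \le C_1 e^{-C_2 r}$ uniformly in $t$; taking $r=(1-\alpha)n$ gives an exponentially small bound in $n$. For the second term, stationarity of the TASEP makes $\sum_{k=2}^n Y_k(t)$ a Binomial$(n-1,\rho)$ variable at every time $t$; since $\alpha > \rho$, a standard Chernoff estimate gives another exponentially small bound in $n$, again uniformly in $t$. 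Adding the two estimates produces constants $C_5, C_6>0$ with $\pr\{B'(Q'(t))>n\} \le C_5 e^{-C_6 n}$, and the second inequality in the theorem follows from the inclusion $\{B(Q(t))>n\} \subset \{B'(Q'(t))>n\}$.

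The main subtlety I anticipate is that $Q'_1(t)$ and the family $(Y_k(t))_{k\ge 2}$ are built on the same coupled probability space and so are not independent, which rules out multiplying tail estimates directly. The point is that this is bypassed by the trivial union bound above, which uses only the marginal distributions: the exponential tail of $Q'_1(t)$ from Theorem~\ref{queue-exp-tail}, and the product-Bernoulli structure of the stationary TASEP at each fixed time. The only other thing that must be checked for uniformity in $t$ is that both marginal tail estimates are themselves uniform in $t$, and both are — the first by monotonicity in $t$, the second by stationarity of the TASEP.
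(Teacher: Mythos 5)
Your proof is correct and follows exactly the route the paper has in mind: the paper's proof is only a one-sentence sketch listing the ingredients (stochastic monotonicity of $Q'_1(t)$, the exponential tail of $Q'_1(\infty)$ from Theorem~\ref{queue-exp-tail}, the fact that $Q'$ stays within the stationary TASEP at sites $n\ge 2$, and the definition of $B'$), and your union-bound decomposition into a $Q'_1(t)$ tail estimate plus a Chernoff bound on the Binomial TASEP occupation count is precisely the intended way to combine those facts.
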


\begin{proof}
The result is easily derived from the following facts: $Q'_1(t)$ is stochastically bounded by
$Q'_1(\infty)$;  the distribution of $Q'_1(\infty)$ has exponentially decaying tail
(Theorem~\ref{queue-exp-tail});
process $Q'(t)$ ``stays within'' stationary TASEP of density $\rho$
at all sites $n\ge 2$;
definition of $B'$.
\end{proof}

Theorem~\ref{busy-interval-exp-tail} illustrates in particular the fact that
(when $\lambda<1/4$) the infinite-tandem system under BP algorithm in stationary regime
 is such that there only a ``small''  number of sites
(from site 1 to site $B(Q(t))$) where queue can be greater than 1; while all sites to the
right of $B(Q(t))$ have queue of at most one, and therefore the behavior of the process
``to the right of $B(Q(t))$'' is same as that of TASEP.

\section{Supercritical case: $\lambda > 1/4$}
\label{sec-supercritical}

Note that if $\lambda>1/2$ we immediately see from \eqn{eq-mean-growth}
that $\E Q_1(\infty) = \lim_N \E Q_1^{(N)}(\infty) = \infty$.
Here we prove that, in fact, $Q_1(\infty)$ is infinite w.p.1, under a weaker condition $\lambda>1/4$.
The intuition behind our argument is as follows.
Unless $Q_1(\infty) = \infty$ w.p.1, busy interval $B(Q(t))$ must be stochastically
bounded w.p.1. Then, in stationary regime, all sites ``far enough'' to the right have
at most one customer (particle) in them, and therefore the process ``there'' behaves
as TASEP. However, a stationary TASEP cannot have flux greater than $1/4$, while the flux
of our process must be $\lambda>1/4$, a contradiction.

\begin{thm}
\label{queue-unbounded}
Assume $\lambda>1/4$. Then $Q_1(\infty) = \infty$. (And then $Q_n(\infty) = \infty$
for all $n\ge 1$.)
\end{thm}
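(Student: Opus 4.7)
The plan is a proof by contradiction. Assume $\pr\{Q_1(\infty) < \infty\} > 0$ and derive a contradiction with $\lambda > 1/4$. On the event $\{Q_1(\infty) < \infty\}$, condition \eqref{eq-bound-tail-inf} (valid in the stationary regime by Proposition~\ref{prop-ordering}(v)) gives $Q_n(\infty) \le Q_1(\infty) + 1$ for every $n$, so the stationary configuration is almost surely finite and its one-dimensional marginals are uniformly tight in $n$. I would first establish a flux identity for the infinite system: by conservation of customers together with time-stationarity, the long-run rate of customer crossings from site $n$ to site $n+1$ equals $\lambda$ for every $n$, and a PASTA-style argument identifies this rate with $\pr\{Q_n(\infty) > Q_{n+1}(\infty)\}$, yielding the direct analog of Proposition~\ref{prop-simple-bound} for the infinite system.

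Next I would pass to a shift-invariant limit. Using the uniform tightness of the marginals and the compactness of $\barZ_+^{\Z}$ in product topology, I would form Cesaro averages of the spatially shifted joint distributions of $(Q_{n+k}(\infty))_{k\in\Z}$ and extract a weak limit point $\tilde\nu$ that is both shift-invariant on $\Z$ and invariant under the back-pressure dynamics on $\Z_+^{\Z}$ without exogenous input (the arrival process at the original site~$1$ is translated off to $-\infty$ in the limit, so its contribution to local dynamics vanishes). The flux functional $\pr\{Q_0 > Q_1\}$ is continuous under weak convergence on this state space, so the flux identity passes to the limit and $\tilde\nu$ carries flux exactly $\lambda > 1/4$.

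Finally I would derive the contradiction from the fact that no shift-invariant invariant measure of the back-pressure dynamics on $\Z_+^{\Z}$ can have flux exceeding $1/4$: the extremal such measures with nontrivial flux are the stationary TASEP (product Bernoulli) measures on $\{0,1\}^{\Z}$ of density $\rho$ (cf.\ Theorem VIII.2.1 of \cite{Liggett-book}), whose flux $\rho(1-\rho)$ is at most $1/4$; shift-invariant configurations with $Q_n \ge 2$ on a positive-density set must either redistribute into this TASEP regime or remain stagnant with zero flux, because the dynamics moves a bounded amount of mass per unit time and shift-invariance precludes a persistent net drift in the queue profile. The main obstacle will be making this classification rigorous, i.e., ruling out hypothetical shift-invariant invariant measures of our process in which queues persistently exceed one yet carry flux above $1/4$; this is the technical heart of the argument and is where the specific structure of the back-pressure dynamics (moves happening only at strictly positive queue-differentials) plays the decisive role.
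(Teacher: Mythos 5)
Your high-level strategy — argue by contradiction, pass to a spatially shifted limit to obtain a translation-invariant object with flux $\lambda$, then contradict the $1/4$ flux ceiling for TASEP — matches the skeleton of the paper's proof. The flux-preservation step via continuity of $\pr\{Q_n > Q_{n+1}\}$ under weak convergence on a discrete state space is also essentially correct, subject to the tightness you note.

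The genuine gap is exactly where you flag it, but the right fix is different from what you sketch, and it is worth pinpointing. You want to rule out shift-invariant invariant measures with queues persistently $\ge 2$ by a general classification argument; that is hard and not what the paper does. The paper instead exploits an observation you have not used: if $Q_1(\infty)<\infty$ w.p.1, then the busy interval $B(Q(\infty)) = \min\{n : Q_n(\infty)=0\}$ is finite w.p.1, and combined with \eqref{eq-bound-tail-inf} this forces $Q_n(\infty)\le 1$ for all $n \ge B(Q(\infty))$. Consequently, for the shifted process one has $\pr\{\sup_{n\ge 1} [T_m\tilde Q]_n(t) \le 1\}\to 1$ as $m\to\infty$, so the weak limit is automatically supported on $\{0,1\}$-configurations. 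No classification of ``fat'' invariant profiles is needed; they are excluded a priori by finiteness of the busy interval. Without this step your limit measure may charge configurations with $Q_n\ge 2$, and there is no known result that bounds the flux of such measures.

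A second, subtler difference: even after reducing to $\{0,1\}$-configurations, the paper's limit $\overline Q$ is kept on sites $\{1,2,\ldots\}$ with a stationary (possibly correlated) arrival process at site $1$, so the classification of translation-invariant invariant measures of TASEP on $\Z$ (Theorem VIII.2.1 of \cite{Liggett-book}) does not apply directly. The paper instead tags ``new'' particles (those entering at site $1$ after time $0$) and dominates their trajectory process, in the $\preceq$-order, by a TASEP with an always-occupied source at site $1$; by \cite{Liggett75} (Theorems 1.7(b) and 1.8(a)) the source TASEP has asymptotic flux exactly $1/4$, giving $\lambda \le 1/4$, the desired contradiction. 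Your alternative — extending the limit to all of $\Z$ so the input is pushed to $-\infty$ and then invoking the Bernoulli classification — is plausible but would require a careful verification that the limiting dynamics on $\Z$ are genuinely autonomous TASEP (no boundary effect survives), which the paper sidesteps entirely by the source-comparison device.

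In short: the missing idea is to derive from the contradiction hypothesis that the busy interval is a.s.\ finite and therefore the shifted configuration is a.s.\ eventually $\{0,1\}$-valued, and then to use a coupling comparison with a source TASEP rather than a classification of invariant measures.
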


\begin{proof} The proof is by contradiction -- assume $Q_1(\infty)$ is finite
with positive probability. It is easy to see that this is possible only
if $Q_1(\infty)<\infty$ with probability $1$.
(Otherwise, using the facts that $Q_1(t)$ is stochastically increasing and
weakly converges to $Q_1(\infty)$, and using coupling, we could show that
$Q_1(\infty)=\infty$ with prob. $1$.)
This in turn implies that $B(Q(\infty))$ (which is the weak limit
of stochastically non-decreasing r.v. $B(Q(t))$) is also finite w.p.1.
(Otherwise, we again could show that $Q_1(\infty)=\infty$ with prob. $1$.)

The stationary version of the process $Q(\cdot)$ (i.e. the one with
stationary distribution equal to that of $Q(\infty)$)
we denote by $\tilde{Q}(\cdot)$. This process is such that: w.p.1 
condition \eqn{eq-bound-tail-inf} holds for all $t\ge 0$,
and therefore all $\tilde{Q}_n(t)$ for all $t$ are uniformly
stochastically upper bounded by  $Q_1(\infty)+1$ and then finite w.p.1;
$B(\tilde{Q}(t))$ are finite w.p.1 (equally distributed) random variables
for all $t$; the flux is equal to $\lambda$, namely, $\E F_n(t)/t=\lambda$
for any $t>0$ and $n\ge 1$, where $F_n(t)$ is the number of customers arrivals
at site $n$ in interval $(0,t]$. Note that for $n\ge 2$, the arrival process $F_n(\cdot)$ 
at site $n$ is 
the departure process from site $n-1$. This implies
that increments of processes $F_n(\cdot)$, $n\ge 2$, are stochastically upper bounded 
by the increments of independent Poisson processes $\Pi_{n-1}(\cdot)$.

Consider space-shifted processes 
$\{[T_m \tilde{Q}](\cdot), [T_m F](\cdot), [T_m \Pi](\cdot)\},$
$m=1,2,\ldots$, where $[T_m \tilde{Q}]_i(t) = \tilde{Q}_{m+i}(t)$, $i=1,2,\ldots$,
$t\ge 0$, and $[T_m F]_i(t)$ and  $[T_m \Pi]_i(t)$ defined similarly.
For each $m$ this process is such that $[T_m \tilde{Q}](\cdot)]$ and the increments
of $[T_m F](\cdot)$ and $[T_m \Pi](\cdot)$ are stationary.
(Note that process $[T_m \tilde{Q}](\cdot)$ is {\em not} Markov.)
This process is still well-defined if we assume that each component
$[T_m \tilde{Q}]_i(t)$, $[T_m F]_i(t)$ and $[T_m \Pi]_i(t)$ takes values
in (non-compact) space $\Z_+$ with the usual topology
(because we know that they are finite w.p.1), and with corresponding product topology 
in the space of values of the entire process. Note that, since 
 $B(\tilde{Q}(t))$ is finite w.p.1 and condition \eqn{eq-bound-tail-inf} holds,
we have
\beql{eq-queue-one}
\lim_{m\to\infty} \pr\{\sup_{n\ge 1} [T_m \tilde{Q}]_n(t) \le 1\} =1, ~~\forall t\ge 0.
\end{equation}
Then, using properties of process $\tilde{Q}(\cdot)$ described earlier,
in particular the fact that
increments of $F_n(\cdot)$ are bounded by those of $\Pi_{n-1}(\cdot)$,
it is easy to see that a process consisting of any finite subset
of components $[T_m \tilde{Q}]_i(\cdot)$, $[T_m F]_i(\cdot)$ and $[T_m \Pi]_i(\cdot)$
is tight (cf. Theorem 15.6 in \cite{bil}).
Consequently,
there exists
a subsequence of $\{m\}$ along which the shifted process converges in distribution
to a process 
$\{\overline{Q}(\cdot),  \overline{F}(\cdot), \overline{\Pi}(\cdot)\}$, 
which has the following 
(easily verifiable) structure and properties:\\
(a) $\overline{Q}(\cdot)$ is stationary, with flux equal $\lambda$;\\
(b) $\overline{Q}_n(t) \le 1$ for all $n$ and all $t$;\\
(c) the movement of customers between sites is driven by independent,
unit rate Poisson processes $\overline{\Pi}_i(\cdot)$, according to BP algorithm rules;\\
(d) By (b) and (c), $\overline{Q}(\cdot)$ is a TASEP (with the ``exogenous'' arrivals
at site $1$ forming a  stationary process, 
{\em not} independent of the ``rest of the process'').

Consider the following projection of process $\overline{Q}(\cdot)$. 
All particles arriving at site $1$ after time $0$ and the particle located at site $1$ 
at time $0$ (if any), we will call ``new'' particles, while all particles initially present
at sites $n\ge 2$ are ``old''. Let $Q^*(\cdot)$ denote the process ``keeping track'' of
new particles in $\overline{Q}(\cdot)$, namely $Q^*_n(t)=1$ if there is a new particle located
at site $n$ at time $t$, and $Q^*_n(t)=0$ otherwise. 
The flux of process $Q^*(\cdot)$ from site 1 to site 2 is obviously equal to the 
flux of $\overline{Q}(\cdot)$, which is $\lambda$.
We will compare $Q^*(\cdot)$ to the following TASEP $Q^{**}(\cdot)$,
coupled to it -- with the same Poisson processes driving movement between sites.
The initial state of $Q^{**}(\cdot)$ is: $Q^{**}_1(0)=1$ and $Q^{**}_n(0)=0$ for $n\ge 2$.
By definition, $Q^{**}_1(t)\equiv 1$, that is if at any time a particle moves
from site $1$ to $2$, it is immediately replaced at site $1$ by another particle.
Using the path-wise monotonicity properties given in Section~\ref{sec-basic},
it is easy to see that $Q^*(t) \preceq Q^{**}(t)$, which implies
$$
F_2^*(t) \le F_2^{**}(t), ~~t\ge 0,
$$
where $F_2^*(t)$ and $F_2^{**}(t)$ are the numbers of particle arrivals 
in $(0,t]$ at site $2$
in the processes $Q^*(\cdot)$ and $Q^{**}(\cdot)$, respectively. Then,
\beql{eq-flux-order}
\liminf_{t\to\infty} \frac{\E F_2^{**}(t)}{t} \ge \lim_{t\to\infty} \frac{\E F_2^{*}(t)}{t} = \lambda.
\end{equation}
The TASEP $Q^{**}(\cdot)$ is a special case of one of the processes studied in \cite{Liggett75}.
It is known (see Theorem 1.8(a) and Theorem 1.7(b) of \cite{Liggett75}) that 
the distribution of $Q^{**}(t)$ converges to a stationary distribution, with the corresponding
stationary process having flux $1/4$. This means that
$\lim_{t\to\infty} \E F_2^{**}(t)/t = 1/4$, which contradicts \eqn{eq-flux-order}
since $\lambda>1/4$. Proof is complete.
\end{proof}

\section{Remark on more general input processes}
\label{sec-gen-input}

The Poisson assumption on the input process is adopted to simplify exposition.
We belive our main results can be easily generalized for the case of a stationary ergodic 
input process $A(\cdot)$, as long as large deviations bound
\beql{eq-ldp555}
\pr\{|A(0,t) - \lambda t| > \delta t\} \le C_3 e^{-C_4 t},
\end{equation}
analogous to \eqn{eq-ldp}, holds for any $\delta>0$. Moreover, if \eqn{eq-ldp555} does not hold,
and we only have the ergodicity of $A(\cdot)$, the uniform stochastic boundedness results of
Theorem~\ref{queue-exp-tail} (and then Theorem~\ref{th-main}(i)) 
and Theorem \ref{busy-interval-exp-tail} will still hold,
except the  bounds are proper (finite w.p.1) random variables, not necessarily with exponential tails.

\noindent
{\bf Acknowledgement.} I would like to thank 
Yuliy Baryshnikov for extremely helpful discussions throughout the course
of this work.

\end{document}